\patchcmd\Gread@eps{\@inputcheck#1 }{\@inputcheck"#1"\relax}{}{}
\newtheorem{definition}{Definition}
\newtheorem{theorem}[definition]{Theorem}
\newtheorem{proposition}[definition]{Proposition}
\newcommand{\Z}{\mathbb{Z}}
\newcommand{\calb}{\mathcal{B}}
\title{A characterization of locally ordered ternary relations in terms of digraphs}
\author{
Guillermo Gamboa Quintero \thanks{Computer Science Institute of Charles University, Prague, Czechia. Supported by Charles University Project PRIMUS/24/SCI/012 and European Union’s Horizon 2020 research and innovation programme under the Marie Skłodowska-Curie grant agreement No 101007705. \textit{E-mail}: gamboa@iuuk.mff.cuni.cz} 
\and 
Mart\'{i}n Matamala \thanks{DIM-CMM, CNRS-IRL2807, Universidad de Chile, Chile. Supported by ANID  Basal program FB210005.  \\ \textit{E-mail}: mar.mat.vas@dim.uchile.cl} 
\and 
Juan Pablo Pe\~{n}a \thanks{Departamento de Ingenier\'{i}a Matem\'{a}tica, Universidad de Chile, Chile. Supported by ANID Doctoral Fellowship grant 21211955. \textit{E-mail}: juan.pena@dim.uchile.cl}}
\date{}
\begin{document}

\maketitle

\begin{abstract}
In 1917, Huntington and Kline, followed by Huntington in 1924, studied systems of axioms for ternary relations aiming to capture the concepts of linear order (called \emph{betwenness}) and cycle order, respectively. Among many other properties, they proved there are several independent set of axioms defining either linear order or cycle order.


In this work, we consider systems arising from the four common axioms of linear order and cycle order, together with a new axiom \textbf{F}, stating that if a tuple $abc$ is in the relation, then either $cba$ or $bca$ is in the relation as well. Although, at first glace this allows for a much richer type of systems, we prove that these are either of linear order or cycle order type. Our main result is a complete characterization of the finite systems satisfying the set of axioms $\{ \textbf{B}, \textbf{C}, \textbf{D}, \textbf{F}, \textbf{2} \}$, where \textbf{B}, \textbf{C}, \textbf{D} and \textbf{2} are axioms presented by Huntington. Unlike what happens in the previous situation, with this modification we obtain a larger family of systems which we characterize in terms of digraphs.

\end{abstract}

\section{Introduction}

The concept of betweenness is a classical object in mathematics, originating in geometry and order theory.  Based on the works of Pasch \cite{pasch} and Peano \cite{peano}, Hilbert \cite{hilbert} described the concept of \textit{being in between} as a ternary relation $\calb$ over a ground set $V$. Following this, a fundamental problem was to understand when such a relation can be associated to a linear or total order on $V$. In 1917, Huntington and Kline \cite{huntkline} defined a betweenness as a ternary relation  $\calb$ on a set $V$ which satisfies at least one of eleven sets of axioms. Below we present one of these sets of axioms. To ease presentation, we have denoted by $abc$ the tuple $(a,b,c)$. The set consists of the following six axioms:
\begin{itemize}
    \item[\textbf{A}] For all $a,b,c\in V$, $abc\in\calb$ implies $cba\in\calb$.
    
    \item[\textbf{B}] For all $a,b,c\in V$, if $a,b$ and $c$ are pairwise distinct, then $\calb$ contains at least one of the triples $abc,acb,bac,bca,cab$ or $cba$.
    
    \item[\textbf{C}] For all $a,b,c\in V$, $abc\in\calb$ implies $acb \notin\calb$.
    
    \item[\textbf{D}]For all $a,b,c\in V$, $abc\in\calb$ implies $a,b$ and $c$ are pairwise distinct.
    
    \item[\textbf{1}] For all $a,b,c,x\in V$, $abc\text{ and } bcx\in\calb$ implies $abx\in\calb$.

    \item[\textbf{2}] For all $a,b,c,x\in V$, $abc\text{ and } bxc \in\calb$ implies $abx\in\calb$.
    
\end{itemize}
It is clear that, for any strict total order $<$ on $V$, the ternary relation  $\calb_<$ given by  $$abc\in\calb_< \iff a<b< c \lor c <b< a$$ satisfies axioms \textbf{A}, \textbf{B}, \textbf{C}, \textbf{D}, \textbf{1} and \textbf{2}. Conversely, in \cite{hunt1935}, it was proved that for any ternary relation $\calb$ satisfying these six axioms one can define a strict total order $<$ such that  $\calb=\calb_<$.


In what follows we still call a ternary relation satisfying axioms \textbf{A}, \textbf{B}, \textbf{C}, \textbf{D}, \textbf{1} and \textbf{2} a betweenness. Morever, we call a ternary relation $\calb$ on a set $V$ a \emph{pre-betweenness} if it only satisfies axioms
\textbf{A}, 
\textbf{B}, 
\textbf{C}, 
\textbf{D} and 
\textbf{2}. 
Although Huntington proved that the set of axioms \textbf{A}, 
\textbf{B}, 
\textbf{C}, 
\textbf{D}, 
\textbf{1}, 
\textbf{2} is independent, in \cite{shepperd1956transitivities}, Shepperd proved that a pre-betweenness is a betweenness unless the ground set $V$ has only four points and the relation $\calb$ is that defined by the metric space induced by a non-oriented cycle of length four. To illustrate our point, let $d: V\times V \rightarrow \mathbb{R}^+$ be a function. One may define a ternary relation  $\calb_d$ by $$abc\in\calb_d \Leftrightarrow d(a,c)=d(a,b)+d(b,c).$$ One of the earliest formal treatments of ternary relations  $\calb_d$  for a metric $d$ was given by Menger \cite{menger}. In this  context  $\calb_d$ encodes the idea of one point lying \textit{in between} two others. Moreover, Menger noticed that  $\calb_d$ satisfies \textbf{A}, \textbf{C}, \textbf{D},  \textbf{2} and \textbf{3}, where $$\textbf{3}: \forall a,b,c,x\in V, abc, bxc \in\calb\implies axc\in\calb.$$

This framework was further developed in subsequent axiomatic studies, such as  Altwegg’s investigation of partially ordered sets \cite{altwegg1950axiomatik} and Sholander’s work on trees, lattices, and order \cite{sholander1952trees}, both of which explored the interplay between order-theoretic properties and ternary relations. In this context, the result of Sheppard was rediscovered in \cite{richmondrichmond}, for metrics $d$ on a set $V$ where any 3-set of $V$ satisfies the triangle inequality as equality. In this case,  for such  a metric $d$, the relation  $\calb_d$ satisfies \textbf{B}. In \cite{beaudou2013lines} a ternary relation $\calb$ satisfying \textbf{A}, \textbf{C}, \textbf{D}, \textbf{2} and \textbf{3} was called a \textit{pseudometric betweenness} and one such that $\calb=\calb_d$, for some metric $d$, a  \textit{metric betweenness}. Moreover, it was proved that the 3-uniform hypergraph associated with certain pseudometric betweenness cannot be defined by a metric. One of the main ingredient was a technical lemma whose content was, essentially, the result of Sheppard. Beyond total orders, betweenness structures induced by partial and strict orders have also been investigated \cite{fishburn1971betweenness,Lih,rautenbach2011strict,bankston2013road,courcelle2020betweenness}.

Another context where the notion of being in between arises is that of \emph{transit functions}. These functions, originally studied in the context of graphs and posets \cite{mulder2007transit}, associate to every unordered pair ${x,y}$ a set of points \textit{in between} $x$ and $y$. More recent work has extended this to \emph{directed transit functions} over quasimetric spaces \cite{anil2025directed}, where notions of directionality and asymmetry play a crucial role. In these settings, the natural ternary structure is that induced by interval-like behavior.

In 1924, Huntington proposed an axiomatization, similar to that given above for betweennesses, to capture cyclic orders \cite{huntington1924sets}. The most familiar example is given by a set of points on the circumference of a circle, where $uvw$ belongs to the cyclic betweenness relation if $u$, $v$, and $w$ are distinct and appear in that order when traversing the circle clockwise, before completing a full turn. 
His axioms capture the core properties of such cyclically ordered triples and laid the foundation for later developments in the theory of circular orderings.
Interestingly, the proposed system of axioms consists of those of betweenness but replacing axioms \textbf{A} and \textbf{1} (in the system defined by $\{\textbf{A}, \textbf{B}, \textbf{C},\textbf{D},\textbf{1},\textbf{2}\}$) by the following axiom \textbf{E}:
$$\textbf{E}: \forall a,b,c\in V, abc\in\calb\implies bca\in\calb.$$
In both cases, betweenness and cycle order, the following dual version of axiom \textbf{C} holds 
$$\textbf{C'}: \forall a,b,c\in V, abc\in \calb \implies bac\notin \calb.$$
For finite sets $V$, the previously mentioned results of Huntington and Shepperd can be summarized by saying that the ternary relation $\calb$ on $V$ is  isomorphic to a non-oriented path or to a non-oriented cycle of length four, when $\calb$ is a pre-betweenness, and to a  directed cycle, when it is a cycle order. We shall see that the one dimensional aspect of these ternary relations can be captured with the concept of \emph{line}. For a ternary relation $\calb$ on a set $V$, the \textit{line} $\overrightarrow{ab}$ is the set of all points $x$ in $V$ such that $x=a, x=b$, or $xab, axb$ or $abx$ belong to $\calb$. Then, \[
\overrightarrow{ab} = \{ x \in V \mid xab \in\calb\text{ or } axb\in\calb\text{ or } abx \in\calb\}\cup \{a,b\}.
\]
It turns out that every pre-betweenness or cycle order defines a system with a unique line. This aligns with the intuition coming from linear orders and embeddings into $\mathbb{R}$ and of cycle order and the oriented circumference, but it also includes the exceptional case of the pre-betweenness induced by the non-oriented cycle of length four, $C_4$.

It is easy to see that axiom \textbf{B} is implied by axiom \textbf{B'} given by:
$$\textbf{B'}:\forall a,b\in V, a\neq  b\implies\overrightarrow{ab}=V.$$
The converse is not necessarily true. However, it is clear that it is true when \textbf{A} or \textbf{E} also holds. Less obviously, we can obtain axiom \textbf{B'} from axiom \textbf{B} and axiom \textbf{F} given by:
$$\textbf{F}: \forall a,b,c\in V, abc\in\calb\implies bca \in\calb\lor cba \in\calb.$$

In fact, for the sake of a contradiction, let us assume that there are $a$ and $b$ in $V$, such that $\overrightarrow{ab}\neq V$. Then there is $x\in V$ such that none of the triples $xab,axb,abx$ belongs to $\calb$. From \textbf{B} we get that $xba,bxa$ or $bax$ belongs to $\calb$.  But from \textbf{F} we get the contradiction
$axb\in\calb\lor xab\in\calb$, when $bax\in\calb$,
the contradiction
$xab\in \lor axb\in\calb$, when $bxa\in\calb$, and the contradiction
$bax\in\calb\lor abx\in\calb$, when $xba\in\calb$. 

Notice that axiom \textbf{F} is not implied by axioms \textbf{C} and \textbf{B'}, since the ternary relation 
$\{abc,bac,cab\}$, does satisfy axioms \textbf{C} and \textbf{B'}, but not axiom \textbf{F}. However, from axioms
\textbf{C'} and  \textbf{B'} we obtain \textbf{F}. In fact, given $a,b$ and $c$ in $V$ such that $abc\in\calb$, then axiom \textbf{C'} implies that $bac\notin\calb$. From axiom \textbf{B'} applied to $\overrightarrow{ba}$ we get that one of the triples $bca$ or $cba$ belongs to $\calb$. 

In this work we are primarily interested in relations satisfying \textbf{B}, \textbf{C}, \textbf{D}, \textbf{F}, \textbf{2}. Our motivation comes from ternary relations  $\calb_d$ defined in terms of a quasimetric $d$, a function that satisfies the same properties than a metric, with the exception of symmetry. Such ternary relations satisfy axioms \textbf{C}, \textbf{D}, \textbf{2} and \textbf{3}. In the seek of a characterization of ternary relations  $\calb_d$ defining exactly three or four lines none of which is $V$, for arbitrary sets $V$  \cite{gamaza2024}, we have found that they come from \emph{an expansion} of a ternary relation also associated to a quasimetric, but on three or four points, respectively, satisfying \textbf{B'}.

The first such example was obtained in \cite{armaza2023} and consists of a quasimetric on four points definining exactly three lines none of them being the set of the four points.  Before that, the mere existence of such quasimetric $d$ was unclear. In fact, in 2008 Chen and Chv\'{a}tal \cite{chenchvatal} conjectured that these spaces do not exist, when $d$ is a metric. More precisely, the conjecture says that the number of lines defined by a ternary relation  $\calb_d$ on a set $V$, when $d$ is a metric,  is at least the number of points of $V$, unless $V$ itself is a line.  This conjecture is still wide open, although there have been some recent interesting advances, mainly for metrics coming from several families of graphs (for example, \cite{abbemaza, abkha, beboch, bekharo, ch2014, kantor, jualrove}). Moreover, asymptotic bounds for the number of lines in terms of the number of points in $V$ have been proven, under the assumption that the set $V$ is not a line. The current best upper bound, obtained by Huang \cite{huang}, is $\Omega(n^{2/3})$, where $n$ is the number of points. It is worth to mention that the conjecture has been proved for the class of quasimetrics defined by semi-complete directed graphs \cite{arma2021,armaza2023sc} and it may still be valid for quasimetrics defined by directed graphs, as the quasimetrics discovered in \cite{armaza2023sc,gamaza2024}, mentioned above, cannot be defined by digraphs.

We also consider systems satisfying axioms \textbf{B}, \textbf{C},  \textbf{D}, \textbf{F} and \textbf{9}, where $$\textbf{9}: \forall a,b,c,x\in V, abc\in\calb\implies abx\in\calb\lor xbc.$$  The interest on axiom \textbf{9} comes from the fact that replacing axioms \textbf{1} and \textbf{2}, or axiom \textbf{2}, by this axiom, results in two sets of axioms, one for a betweenness and one for a cycle order, respectively \cite{huntkline,hunt1935}.

For these systems we prove that, if for some triple $abc\in\calb$ we also have $cab\in\calb$, then the system is a cycle order. Therefore, by including axiom \textbf{9} instead of axiom \textbf{2}, while keeping axiom \textbf{F}, we obtain precisely two type of systems: pre-betweennessess or cycle orders. However, systems satisfying $\{\textbf{B}, \textbf{C},\textbf{D}, \textbf{F},\textbf{2}\}$, additionally to pre-betweennesses and cycle orders include more complex structures. Our main result is a complete description of these systems. Roughly speaking, they are obtained from a cycle order by duplicating points in a way that such a point and its duplicate have all the remaining points in between them.

\section{Main result}

In this section, we present an independent set of axioms of ternary relations with a unique line. As mentioned above, when the ternary relation is a betweenness or a cycle order, then it has only one line $V$.

To ease the presentation of our main result we collect in the following proposition several properties that are implied by axiom \textbf{B}, \textbf{C} \textbf{D} and \textbf{F}, and also some that comes from these axioms together with axiom \textbf{2}. 
In the rest of this work we say that a triple $abc$, for points $a,b$ and $c$ in a set $V$ endowed with a ternary relation $\calb$ is \emph{valid}, when it belongs to the ternary relation under consideration. 

\begin{proposition}
\label{p:tools}
	Let $\calb$ be a ternary relation on $V$ satisfying axioms \textbf{B},\textbf{C},\textbf{D} and \textbf{F}. Then, we have the following properties:
    
	\begin{enumerate}
		\item[\textbf{L}\,] $: \forall a,b,c \in V, abc\in\calb\implies \calb\cap \{a,b,c\}^3\in \{ \{abc,cba\}, \{abc,bca,cab\} \}.$ 
        
		\item[\textbf{G}\,]$: \forall a,b,c,d \in V, abc\notin\calb\implies bac \in\calb\lor acb\in\calb.$
	        
	    \item[\textbf{C'}]$: \forall a,b,c \in V, abc\in\calb\implies bac\notin\calb$
	\end{enumerate}
	
	Moreover, if \textbf{2} holds, then we also have the following properties:
    
	\begin{enumerate}
		\item[\textbf{3}\ ]$: \forall a,b,c,x \in V, abc\in\calb\land bxc \in\calb\implies axc\in\calb$.
	    \item[\textbf{2'}]$: \forall a,b,c,x \in V, abc\in\calb\land axb \in\calb\implies axc\in\calb$.
	    \item[\textbf{3'}]$: \forall a,b,c,x \in V, abc\in\calb\land axb \in\calb\implies xbc\in\calb$.

	\end{enumerate}
\end{proposition}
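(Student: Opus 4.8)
The plan is to derive each of the six properties by pure logical deduction from the stated axioms, repeatedly using the key fact that for any three pairwise distinct points only six orderings of a triple are possible, and axioms \textbf{B}, \textbf{C}, \textbf{F} severely restrict which subsets of these six can simultaneously lie in $\calb$. I would prove the properties in the order \textbf{C'}, \textbf{L}, \textbf{G}, then \textbf{3}, \textbf{2'}, \textbf{3'}, since the later ones use the earlier ones.

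First, for \textbf{C'}: suppose $abc \in \calb$ but also $bac \in \calb$. Apply \textbf{F} to $abc$ to get $bca \in \calb$ or $cba \in \calb$; apply \textbf{F} to $bac$ to get $acb \in \calb$ or $cab \in \calb$. Now combine with axiom \textbf{C} (which forbids both $xyz$ and $xzy$): e.g. $abc \in \calb$ forbids $acb$, so from $bac$ we must have $cab \in \calb$; then apply \textbf{F} to $bca$ or $cab$ and push the contradictions through \textbf{C} until every branch collapses. This is a short case chase. Next, \textbf{L}: given $abc \in \calb$, axiom \textbf{C} kills $acb$, and \textbf{C'} (just proved) kills $bac$; so $\calb \cap \{a,b,c\}^3 \subseteq \{abc, bca, cab, cba\}$. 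Axiom \textbf{F} applied to $abc$ forces at least one of $bca, cba$. Then I would show that $bca$ and $cba$ cannot coexist (apply \textbf{C} and \textbf{C'} to $bca$: these forbid $bac$ and $cba$ respectively — wait, need to check which; in any case the forbidden-pair structure of \textbf{C}/\textbf{C'} applied to $bca$ rules out $cba$), and similarly handle the remaining combinations so that the only two maximal consistent sets are exactly $\{abc, cba\}$ and $\{abc, bca, cab\}$. For \textbf{G}: this is essentially the contrapositive of \textbf{B} combined with \textbf{C}/\textbf{C'} — if $abc \notin \calb$ then by \textbf{B} one of the other five triples on $\{a,b,c\}$ is in $\calb$ (assuming the points distinct; if not distinct, \textbf{D}-type reasoning or a direct check handles it), and using \textbf{L} to understand what each such membership entails, one shows $bac \in \calb$ or $acb \in \calb$ must hold.

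For the three properties that additionally use \textbf{2}, the idea is to feed the hypotheses through axiom \textbf{2} after relabelling, and then clean up with \textbf{L}. For \textbf{3} ($abc, bxc \in \calb \implies axc \in \calb$): apply \textbf{2} in the form $abc \land bxc \implies ab x$... hmm, \textbf{2} reads $abc \land bxc \implies abx$, so I get $abx \in \calb$; then I need another application — apply \textbf{F} or \textbf{L} to reorder $bxc$ into a form like $cxb$ (via $\{bxc, cxb\}$ being the two-element option of \textbf{L}, if it applies) and re-run \textbf{2}, or apply \textbf{2} to $abx$ and $bxc$ to cascade toward $axc$. For \textbf{2'} ($abc \land axb \implies axc$): from $axb$ we get by \textbf{L} that $bxa$ may also be available; combine $abc$ with a reordering of $axb$ through \textbf{2}. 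For \textbf{3'} ($abc \land axb \implies xbc$): similar, using \textbf{L} on $abc$ to get $cba$, then apply \textbf{2} to $cba$ and $bxa$ (a reordering of $axb$) to get $cbx$, then reorder via \textbf{L} to $xbc$.

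The main obstacle I expect is bookkeeping in the \textbf{2}-dependent properties: axiom \textbf{2} is not symmetric, so each application produces a triple in one fixed cyclic-ish shape, and I will need to repeatedly invoke \textbf{L} to convert a valid triple into its ``mirror'' $cba$ (when the $\{abc,cba\}$ case holds) or into $bca$ / $cab$ (when the three-cycle case holds) before \textbf{2} can be applied again. Since \textbf{L} gives a genuine dichotomy, some branches of the argument split, and I must make sure that in the three-cycle branch the desired conclusion also holds (often trivially, because in that branch every cyclic rotation is valid). Keeping track of which branch one is in, and verifying the conclusion survives in both, is the only delicate part; each individual deduction is a one- or two-line application of an axiom.
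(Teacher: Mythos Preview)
Your plan for \textbf{C'}, \textbf{L}, \textbf{G} is correct and in fact slightly cleaner than the paper's: proving \textbf{C'} directly from \textbf{C} and \textbf{F} (as you outline) works in three lines, and then \textbf{L} follows immediately from \textbf{C}, \textbf{C'}, \textbf{F} exactly as you say. The paper does it in the opposite order, deriving \textbf{C'} as a corollary of \textbf{L}; both routes are fine.

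For \textbf{3}, \textbf{2'}, \textbf{3'} your general strategy is right, but your concrete sketches only cover the ``all symmetric'' branch of \textbf{L}, and your parenthetical that the cyclic branches are ``often trivially'' handled is not accurate. For instance, in your argument for \textbf{3'} you produce $cbx$ and then want to ``reorder via \textbf{L} to $xbc$''; but if $\{c,b,x\}$ happens to be cyclic you get $cbx,bxc,xcb$ and \emph{not} $xbc$, so you must instead derive a contradiction (here: $abc\land bxc\Rightarrow abx$ by \textbf{2}, contradicting $axb$ via \textbf{C}). Similar non-trivial contradiction arguments are needed in several of the mixed cyclic/symmetric sub-branches, and for \textbf{3} the case analysis is genuinely long. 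The paper handles this uniformly by arguing by contradiction: assume the conclusion fails, apply \textbf{G} to obtain one of two alternative triples, and then use \textbf{2} (and, for \textbf{2'} and \textbf{3'}, the already-proven \textbf{3}) to reach a clash with \textbf{C} or \textbf{C'}. That contrapositive-through-\textbf{G} pattern is the missing ingredient in your outline; once you adopt it, the remaining work really is bookkeeping, but without it the direct ``reorder and re-apply \textbf{2}'' approach does not close in every branch.
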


\begin{proof}
	To ease the presentation, along the proof we do not mention explicitely the use of axiom \textbf{D}. Let us assumme that \textbf{B}, \textbf{C}, \textbf{D} and \textbf{F}. 

	\begin{enumerate}
		\item[\textbf{L}\,] As $abc\in\calb$, by \textbf{F} we have that $cba\in\calb\lor bca\in\calb$. Assumme $cba\in\calb$.   By \textbf{C} we get that $cab\notin\calb\land acb\notin\calb$. Now, we have to prove that $ bca\notin\calb\land bac\notin\calb$. Suppose that $bca\in\calb$, by \textbf{F} we get $cab\in\calb\lor acb\in\calb$, which is a contradiction. Thus, $bac\in\calb$.   By \textbf{F}, $acb\in\calb\lor cab\in\calb$, which is also a contradiction. Then  $ \calb\cap \{a,b,c\}^3=\{abc,cba\}$.
	
		Now, assume $bca\in\calb$.   By \textbf{C}, we have that $ bac\notin\calb\land acb\notin\calb$.   By \textbf{F}, we have that $acb\in\calb\lor cab\in\calb$, since $acb\notin\calb$ then $cab\in\calb$ and $ cba\notin\calb$. Thus, $ \calb\cap\{a,b,c\}^3=\{abc,bca,cab\}$.
        
		\item[\textbf{G}\,] Assume that $ abc\notin V \land bac\notin V\land acb\notin\calb$.   By $\textbf{B}$ we get that $bca\in\calb\lor cab\in\calb\lor cba\in\calb$ and by \textbf{L} we know that $| \calb\cap \{a,b,c\}^3|=3$. Thus, $ \calb\cap \{a,b,c\}^3=\{bca,cab,cba\}$ which is a contradiction, as $cab\in\calb\implies cba\notin\calb$, by \textbf{C}.
	 
		\item[\textbf{C'}\,] Since $abc\in\calb$, by \textbf{L}  we get that $ \calb\cap \{a,b,c\}^3$ is equal to  $\{abc,cba\}$ or to $\{abc,bca,cab\}$. Either case, $bac\notin\calb$.
        	\end{enumerate}

	Now, assume that \textbf{2} holds.
	\begin{enumerate}
		\item[\textbf{3}\,] We want to prove $axc\in\calb$.   By \textbf{2}, $abc\in\calb\land bxc\in\calb\Rightarrow abx\in\calb$.  By \textbf{L} we know that the set of valid triples of the set $\{b,c,x\}$ is either $\{bxc,xcb,cbx\}$ or $\{bxc,cxb\}$. 
	
		In the first case, by \textbf{2}, we have that $acx\in\calb\land cbx\in\calb \Rightarrow acb\in\calb$ which contradicts $abc\in\calb$. Hence, $acx\notin\calb$ and, by \textbf{G} we get that $cax\in\calb\lor axc\in\calb$. If $cax\in\calb$, then, by \textbf{2} applied to $cax\in\calb\land abx\in\calb$, we get that $cab\in\calb$. Again, by \textbf{2} and $xcb\in\calb\land cab\in\calb$, we get $xca\in\calb$. From \textbf{L} applied to $\{a,c,x\}$ we conclude $axc\in\calb$. 
	
		In the second case, we first prove that $ bxa\notin\calb$. Otherwise, from \textbf{L}  we  also get $xab\in\calb$. By \textbf{2} applied to $cxb\in\calb\land xab\in\calb$ we get $cxa\in\calb$. Again by \textbf{2} applied to $bca\in\calb\land cxa\in\calb$ and to $cba\in\calb\land bxa\in\calb$ we get $bcx\in\calb$ and $cbx\in\calb$, respectively. Thus, we get  $bca\notin\calb\land cba\notin\calb$ which contradicts \textbf{L} applied to $\{a,b,c\}$.
	
		Since $bxa\notin\calb$, by \textbf{L} applied to $\{a,b,x\}$ we get $xba\in\calb$.   By \textbf{2}, we have that $bxc\in\calb\land xac\in\calb\Rightarrow bxa\in\calb$ which is a contradiction with  \textbf{L} applied to $\{a,b,x\}$. Thus, we have $xac\notin\calb$ and by \textbf{C'} we obtain $axc\in\calb\lor xca\in\calb$. When $xca\in\calb$ we can apply \textbf{2} to $xca\in\calb\land cba\in\calb$ to get $xcb\in  \calb$, which is contradiction. Then, we have $xca\notin  \calb$ and thus the desired conclusion $axc\in  \calb$ holds.
	
		\item[\textbf{2'}\,] Let $abc\in  \calb\land axb\in  \calb$. Assume that $axc \notin  \calb$.   By \textbf{G} we have that $xac\in  \calb$ or $acx\in  \calb$.   By \textbf{3}, $xac\in  \calb\land abc\in  \calb\implies xab\in  \calb$ which contradicts $axb\in  \calb$, by \textbf{C'}.
	
		Thus, we have that $xac\notin\calb\land acx\in  \calb$.   By \textbf{L} we get that $xca\in  \calb$.   By \textbf{2},  $xca\in  \calb\land cba\in\calb\implies xba\in  \calb$. Again by \textbf{L} we get that $bxa\notin  \calb$ and thus $bax\in  \calb$.   By \textbf{3} we get $bax\in  \calb\land acx\in  \calb\implies bac\in  \calb$ which is a contradiction with $abc\in  \calb$, by \textbf{C'}.
	
		\item[\textbf{3'}]  We prove \textbf{3'}. Assume $ xbc \notin  \calb$. Then \textbf{G} implies $bxc\in  \calb\lor xcb\in  \calb$. In the first case, by \textbf{3}, $abc\in  \calb\land bxc\in  \calb\implies abx\in  \calb$, which is a contradiction with $axb\in  \calb$. Thus, $xcb\in  \calb$.   By \textbf{2} we have that $axb\in  \calb\land xcb\in  \calb\implies acb\in  \calb$ which is a contradiction with $abc\in  \calb$.

	\end{enumerate}
 \end{proof}

For further reference, we say that a 3-set $U$ is \emph{symmetric} if $U^3\cap \calb=\{xyz,zyx\}$, for $\{x,y,z\}=U$. Hence, a 3-set $\{a,b,c\}$ is symmetric if and only if $$U^3\cap \calb \in \{ \{abc,cba\},\{acb,bca\}, \{bac,cab\}\}.$$ Similarly, we say that a 3-set $U$ is \emph{cyclic} if $U^3\cap \calb=\{xyz,yzx,zxy\}$, for $\{x,y,z\}=U$. Hence, a 3-set $\{a,b,c\}$ is cyclic if and only if $$U^3\cap \calb \in \{ \{abc,bca,cab\},\{acb,cba,bac\}\}.$$ By using these definitions we have that \textbf{L} in previous proposition says that 3-sets are either cyclic or symmetric, but not both, when axioms \textbf{B}, \textbf{C}, \textbf{D} and \textbf{F} are valid.

Since $$\textbf{A} \lor \textbf{E} \implies \textbf{F}$$ then, the sets of axioms $\{ \textbf{B}, \textbf{C}, \textbf{D}, \textbf{F}, \textbf{2}\}$ and $\{ \textbf{B}, \textbf{C}, \textbf{D}, \textbf{F}, \textbf{9}\}$ are independent. For the latter set of axioms  we have the following result.

\begin{theorem}
	The set of axioms \{\textbf{B},\textbf{C},\textbf{D},\textbf{F},\textbf{9}\} implies $\textbf{A}\lor \textbf{E}$. 
\end{theorem}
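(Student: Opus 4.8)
The plan is to prove the stronger dichotomy that, under the axioms $\{\textbf{B},\textbf{C},\textbf{D},\textbf{F},\textbf{9}\}$, \emph{either every 3-set is symmetric or every 3-set is cyclic}. This already suffices: if every 3-set is symmetric, then $abc\in\calb$ forces $\calb\cap\{a,b,c\}^3=\{abc,cba\}$, so in particular $cba\in\calb$ and $\textbf{A}$ holds; and if every 3-set is cyclic, then $abc\in\calb$ forces $\calb\cap\{a,b,c\}^3=\{abc,bca,cab\}$, so in particular $bca\in\calb$ and $\textbf{E}$ holds. (If $|V|\le 2$ then $\calb=\emptyset$ by $\textbf{D}$ and there is nothing to prove, so assume $|V|\ge 3$; recall that by property \textbf{L} of Proposition~\ref{p:tools} every 3-set of pairwise distinct points is either symmetric or cyclic, but not both.)

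The core is a local statement: \emph{in every 4-set $W\subseteq V$, the four 3-subsets of $W$ have the same type}. Granting it, the theorem follows by connectivity: any two 3-subsets of $V$ are joined by a chain of 3-subsets in which consecutive members meet in exactly two points (the Johnson graph on 3-subsets is connected for $|V|\ge 3$), consecutive members of such a chain lie in a common 4-set and hence have the same type; so all 3-subsets of $V$ share one type and we conclude as above.

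To prove the local statement, suppose some 4-set $W=\{a,b,c,x\}$ has both a cyclic and a symmetric 3-subset. Any two 3-subsets of $W$ meet in exactly two points, and relabelling points of $V$ preserves all hypotheses, so we may assume $\{a,b,c\}$ is cyclic with $\calb\cap\{a,b,c\}^3=\{abc,bca,cab\}$ and $\{a,b,x\}$ is symmetric. Applying axiom \textbf{9} to each of $abc$, $bca$, $cab$ with fourth point $x$ yields
\[
\bigl(abx\in\calb\ \lor\ xbc\in\calb\bigr),\qquad
\bigl(bcx\in\calb\ \lor\ xca\in\calb\bigr),\qquad
\bigl(cax\in\calb\ \lor\ xab\in\calb\bigr).
\]
Now split into the three cases according to which of $a$, $b$, $x$ is the middle point of $\{a,b,x\}$. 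In each case the displayed disjunctions --- combined with \textbf{C'} and a few further applications of \textbf{9} to triples already shown valid --- determine enough triples on the sets $\{a,c,x\}$ and $\{b,c,x\}$ that one is forced into a state where two triples on one of those sets are simultaneously valid (for instance both $xbc\in\calb$ and $bxc\in\calb$), contradicting \textbf{L}. This proves the local statement.

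The main obstacle is precisely this last case analysis: everything stays within four points, but there are several branches, and in each one must feed the right already-known valid triple and the right fourth point into axiom \textbf{9} so that the resulting constraints collide with \textbf{L}. The reduction to four points and the connectivity step are routine.
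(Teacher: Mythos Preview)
Your proposal is correct and follows essentially the same route as the paper: reduce to a four-point configuration with a cyclic $\{a,b,c\}$ and a hypothetically symmetric $\{a,b,x\}$, apply \textbf{9} to each of $abc,bca,cab$ with fourth point $x$, and then split on which of $a,b,x$ is the middle point to derive a contradiction via \textbf{C}, \textbf{C'} and \textbf{L}. Two small differences are worth noting: the paper first observes that \textbf{C'} together with \textbf{9} already yields axiom \textbf{2} (so all of Proposition~\ref{p:tools}, in particular \textbf{3}, becomes available and shortens the case work), whereas your sketch works directly with repeated uses of \textbf{9}---this does go through, but you will need both \textbf{C} and \textbf{C'}, not just \textbf{C'}; conversely, your explicit Johnson-graph connectivity step makes precise the propagation from a single four-set to all $3$-subsets of $V$, a passage the paper leaves implicit in its final sentence.
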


\begin{proof}
	From Proposition~\ref{p:tools} we know that \textbf{C'} holds. We now prove that \textbf{2} follows from \textbf{C'} and \textbf{9}. In fact, if $abc\in\calb\land bxc\in  \calb$, then $abx\in  \calb\lor xbc\in  \calb$. From \textbf{C'} we get that $xbc\notin  \calb$ which proves that $abx\in  \calb$.
	We can now use any of the properties proven in Proposition \ref{p:tools}.
	
	We may assume that there are $a,b,c \in V$ such that $ \calb\cap \{a,b,c\}^3=\{abc,bca,cab\}$, as otherwise, \textbf{A} holds and then $\calb$ is a betweenness. Let $x\in V\setminus\{a,b,c\}$. We will prove that any set $\{u,v,w\}\subseteq\{a,b,c,x\}$ is cyclic.  
	
	By \textbf{9}, we have the following $$(abx\in \calb \lor xbc\in \calb)\land (bcx\in \calb\lor xca\in \calb)\land (cax\in \calb\lor xab\in 
	\calb).$$ Without loss of generality, we may assume that $\{a,b,x\}$ is symmetric.
	
	If $abx$ and $xba$ belong to $\calb$, then $xab\notin \calb$ and thus $cax\in \calb$. By \textbf{9} we get $xbc\in \calb$ or $cba\in \calb$. Hence, $xbc\in \calb$. From $xba\in \calb$ and $bca\in \calb$ we get $xca\in \calb$, which together with  $cax\in \calb$ implies $axc\in \calb$. From $axc\in \calb$ and $xbc\in \calb$ we get that $axb\in \calb$, a contradiction. Similarly, if $bax$ and $xab$ belong to $\calb$, then $abx\notin \calb$ and thus, $xbc\in \calb$. By \textbf{9} we get $bac\in \calb$ or $cax\in \calb$. Hence, $cax\in \calb$. From $bax\in \calb$ and $bca\in \calb$ we get $bcx\in \calb$, which together with  $xbc\in \calb$ implies $cxb\in \calb$. From $cax$ and $cxb$ in $\calb$ we obtain that $axb\in \calb$, a contradiction.
	
	Now, consider the case that the triples $axb$ and $bxa$ belong to $\calb$. By \textbf{9} we get $bxc\in \calb$ or $cxa\in \calb$ which shows that $bcx$ and $xca$ do not belong to $\calb$ by \textbf{C'}, a contradiction. Therefore, we get as conclusion that \textbf{E} holds.
\end{proof}

Before stating our main result, we first present some necessary definitions. For a given integer $p$, let $D_{p,p}$ be the digraph obtained from a directed cycle of length $p$ by duplicating all its vertices. More precisely, we have 

\begin{align*}
V(D_{p,p}) &= \{v_0,u_0,v_1,u_1,\ldots, v_{p-1},u_{p-1}\} \\
A(D_{p,p}) &= \{v_i v_{j} , v_i u_{j}, u_i v_{j}, u_i u_{j} \mid i, j \in \Z_p, j \equiv i+1 \mod p\}.
\end{align*}

For a digraph $D$, it is easy to see that any strongly connected  subdigraph of $D_{p,p}$ defines a quasimetric space with only one line (see Figure~\ref{fig:directed_graphs}).
\begin{figure}[ht]
    \centering
    \begin{subfigure}[t]{.3\textwidth}
        \centering
        \includegraphics[scale=1.5]{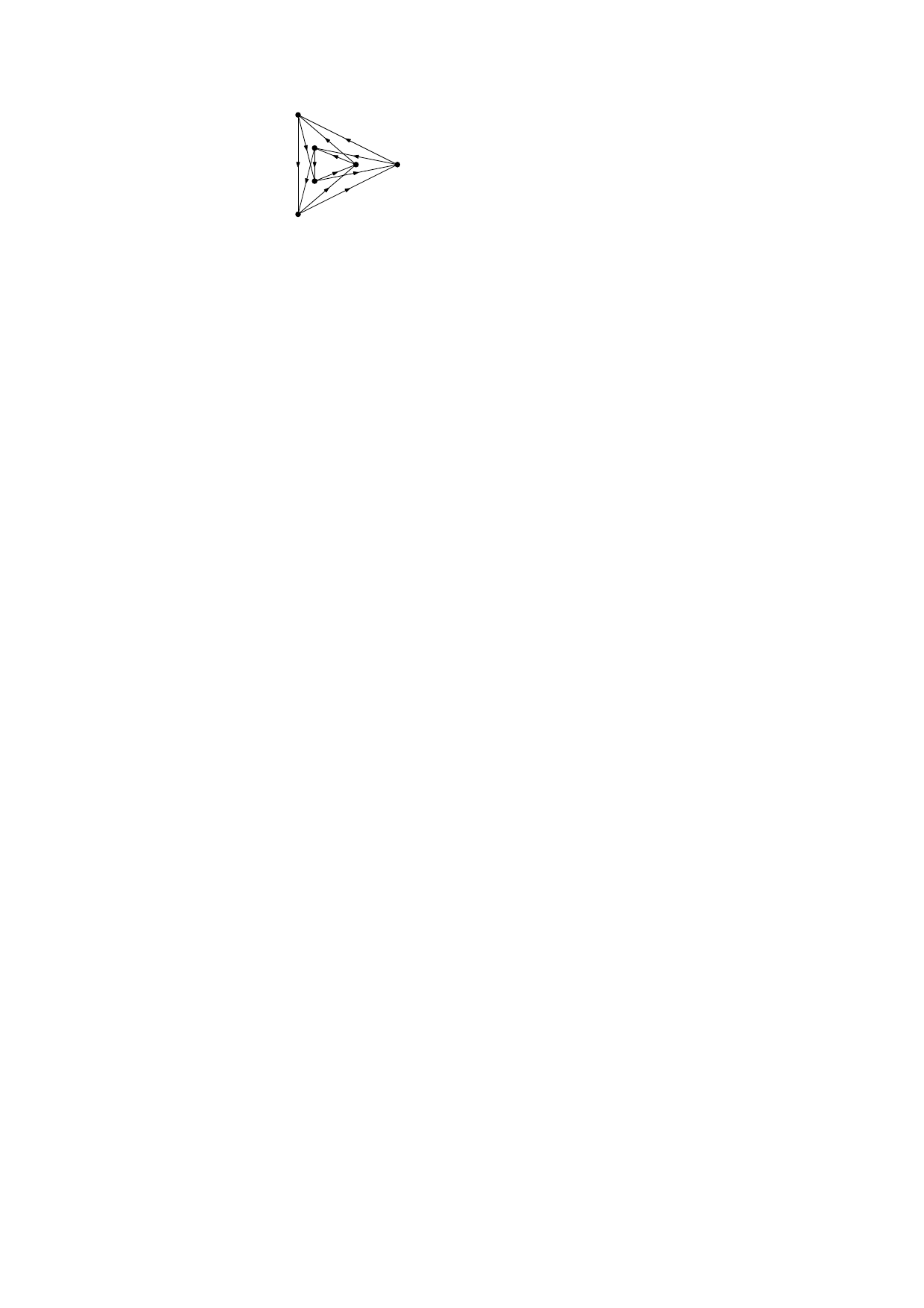}
    \end{subfigure}
    \begin{subfigure}[t]{.3\textwidth}
        \centering
        \includegraphics[scale=1.5]{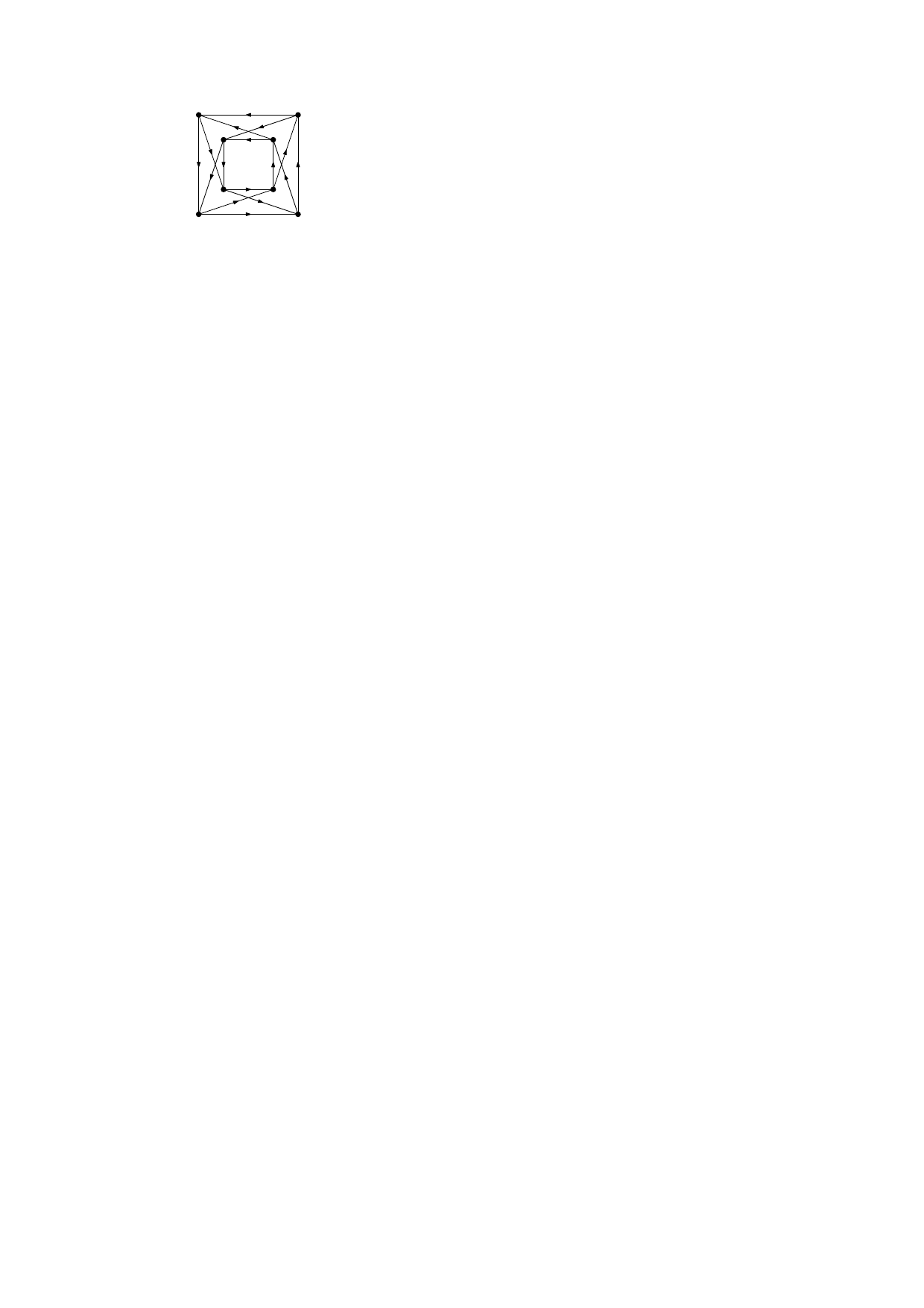}
    \end{subfigure}
    \begin{subfigure}[t]{.3\textwidth}
        \centering
        \includegraphics[scale=1.5]{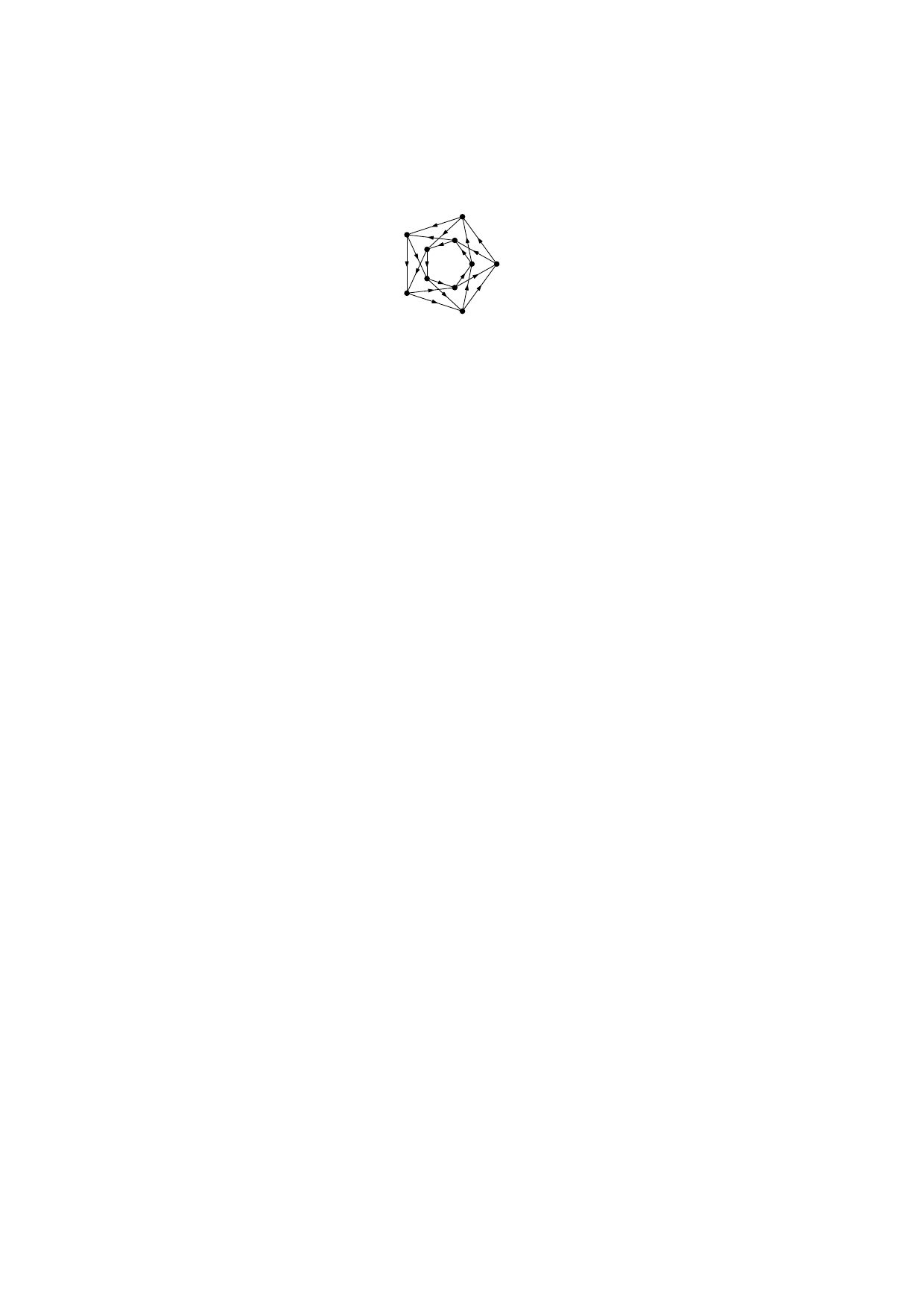}
    \end{subfigure}
    \caption{The directed graphs $D_{3,3}, D_{4,4}$ and $D_{5,5}$.}
    \label{fig:directed_graphs}
\end{figure}

One can see that the ternary relation $\calb_d$ associated to the quasimetric $d$ of $D_{p,p}$ defined by the shortest paths is given by:
$$\{w_iw_jw_k\mid w_l\in \{v_l,u_l\}, l\in \{i,j,k\}, ijk\in \calb_{\Z_p}\}\cup \{v_iv_ju_i,v_iu_ju_i, u_iv_jv_i,v_iu_ju_i\mid i,j\in \Z_p, i\neq j\},$$
where $ijk\in \calb_{\Z_p}$ if and only if $j \equiv i+l \mod p$, $k \equiv j+t \mod p$ and $i \equiv k+s \mod p$, for positive integers $l,t$ and $s$ such that $l+t+s=p$. That is, $\calb_{\Z_p}$ is the cycle order induced by the directed cycle of length $p$ whose set of vertex set is $\{0,1,\ldots,p-1\}$ and whose set of arcs is $\{(0,1),\ldots, (p-2,p-1),(p-1,0)\}$.

In the following theorem, we prove that any ternary relation $\calb$, over a finite set $V$, satisfying \textbf{B}, \textbf{C}, \textbf{D}, \textbf{F} and \textbf{2} which contains at least one cyclic 3-set is isomorphic to one defined by the quasimetric $d$ induced by a strongly connected subdigraphs of $D_{p,p}$, for some $p\leq |V|\leq 2p$.

\begin{theorem}
\label{t:type1iffD}
    If $\calb$ is a ternary relation on a set $V$ satisfying axioms \{\textbf{B}, \textbf{C}, \textbf{D}, \textbf{F}, \textbf{2}\}, then there is $p\leq |V|$ such that $\calb$ is isomorphic with $\calb_d$, for $d$ the quasimetric of a strongly connected subdigraph of $D_{p,p}$ unless $\calb$ also satisfies \textbf{A}, in which case, $d$ is the metric of a path on $V$ or of a undirected cycle of length four.
\end{theorem}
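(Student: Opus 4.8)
The plan is to proceed by a case analysis on whether $\calb$ contains a cyclic $3$-set, using Proposition~\ref{p:tools} throughout. If no $3$-set is cyclic, then by property \textbf{L} every $3$-set is symmetric, which immediately gives axiom \textbf{A}; combined with \textbf{B}, \textbf{C}, \textbf{D}, \textbf{2} we are in the pre-betweenness case, so by the Shepperd/Richmond result quoted in the introduction $\calb$ is a non-oriented path or the non-oriented $C_4$, both of which are instances of $\calb_d$ for a subdigraph of $D_{p,p}$ (a symmetric path, or $D_{2,2}$ itself). So the real content is when some $3$-set $\{a,b,c\}$ is cyclic, say $\calb\cap\{a,b,c\}^3=\{abc,bca,cab\}$, and we must build the cyclic structure of $\Z_p$ together with the ``duplicate'' pairs.

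First I would set up an equivalence-type relation on $V$: declare $x\sim y$ if $x=y$ or the pair $\{x,y\}$ has ``everything between them'', i.e.\ for all $z\notin\{x,y\}$ we have $xzy\in\calb$ and $yzx\in\calb$ (equivalently, $\{x,y,z\}$ is symmetric with $z$ in the middle). Using \textbf{2}, \textbf{2'}, \textbf{3}, \textbf{3'} one should show this is an equivalence relation with classes of size at most two — a class of size three would force a configuration contradicting \textbf{C} or \textbf{L}. These classes will be the pairs $\{v_i,u_i\}$. Then on the quotient $\bar V = V/\!\sim$ I would define a ternary relation $\bar\calb$ by pushing forward $\calb$, show it is well-defined (independence of representatives, again via \textbf{2}-type transitivities), and check it satisfies \textbf{B}, \textbf{C}, \textbf{D}, \textbf{2} \emph{and} now also \textbf{E} — the last because collapsing the duplicate pairs destroys exactly the symmetric triples that obstructed \textbf{A}/\textbf{E}, leaving only cyclic ones. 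By Huntington's theorem, $\bar\calb$ is then a cycle order, hence isomorphic to $\calb_{\Z_p}$ for $p=|\bar V|$, which fixes the cyclic labelling $v_0,\dots,v_{p-1}$ of the classes and gives $p\le|V|\le 2p$.

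Next I would reconstruct $\calb$ from $\bar\calb$ and the pairing. For triples whose three points lie in three distinct classes, $\calb$ must agree with the pullback of $\bar\calb$: one inclusion is the definition of $\bar\calb$, the other needs an argument that a triple $w_iw_jw_k$ with $ijk\notin\calb_{\Z_p}$ cannot be valid, which follows from \textbf{L} applied in the quotient together with the definition of $\sim$ to transfer validity up and down. For triples with exactly two points in one class, say $v_i,u_i$ and some $w_j$ with $j\ne i$, the defining property of $\sim$ says precisely $v_iw_ju_i,u_iw_jv_i\in\calb$ and (by \textbf{C}, \textbf{L}) these are the only valid orderings — this matches the extra family $\{v_iv_ju_i,\dots\}$ in the displayed formula for the $D_{p,p}$-relation. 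A triple with all three points in one class is impossible since classes have size $\le 2$. This shows $\calb$ is isomorphic to $\calb_d$ where $d$ is the quasimetric of the subdigraph of $D_{p,p}$ whose arc set is $\{w_iw_j : \{v_i,u_i\}\cap(\text{class of }w_i)\ne\emptyset,\ j\equiv i+1\}$ restricted to the vertices actually present — one checks this subdigraph is strongly connected because $\bar\calb=\calb_{\Z_p}$ forces each class to be nonempty and consecutive classes to be joined.

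The main obstacle I expect is proving that $\sim$ is transitive and that the pushed-forward relation $\bar\calb$ is well-defined and satisfies \textbf{2}: both require delicate bookkeeping with the transitivity axioms \textbf{2}, \textbf{2'}, \textbf{3}, \textbf{3'} across several overlapping triples, and one has to rule out degenerate small cases (e.g.\ $|V|\le 4$, where the distinction between $D_{2,2}$ and $C_4$ and between genuinely cyclic and symmetric structures is most subtle) by hand. A secondary subtlety is that when $\bar V$ has only one or two elements the ``cycle order'' $\calb_{\Z_p}$ is empty and the statement should be read as reducing to the symmetric (path / $C_4$) case, so the case split at the very start must be arranged so that ``contains a cyclic $3$-set'' genuinely forces $p\ge 3$; this is where I would be most careful to make the hypotheses of Huntington's cycle-order theorem literally apply.
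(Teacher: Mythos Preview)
Your quotient approach is genuinely different from the paper's: the paper first builds a \emph{maximal geodesic circuit} $C=x_0,\dots,x_{p-1}$ (a longest sequence with $x_{i-1}x_ix_{i+1}\in\calb$ and no point insertable between consecutive $x_i$), and only afterwards shows that every $a\notin C$ is a ``duplicate'' of a unique $x_i$ in the sense $[ax_i]\cap[x_ia]=V$. You reverse the order: first isolate the duplicate pairs via $\sim$, quotient, and then invoke Huntington's cycle-order theorem on $\bar V$.

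The part of your plan that goes through cleanly is the size bound on classes: if $x_1\sim x_2\sim x_3$ are distinct then $x_1x_3x_2\in\calb$ (from $x_1\sim x_2$) and $x_3x_1x_2\in\calb$ (from $x_2\sim x_3$), contradicting \textbf{C'}; so transitivity is vacuous and classes have size $\le 2$. It is also true (via \textbf{3'}) that if $\{x_1,y,z\}$ is \emph{cyclic} and $x_1\sim x_2$, then $\{x_2,y,z\}$ is cyclic with the same orientation, so the pushforward is consistent on cyclic triples.

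The real gap is the step you flag only as ``delicate bookkeeping'': showing that $\bar\calb$ is well-defined and satisfies \textbf{E} amounts precisely to proving that \emph{every symmetric $3$-set has two of its points $\sim$-equivalent}. This is not bookkeeping; it is the structural heart of the theorem. Concretely, suppose $x_1\sim x_2$ and $\{x_1,y,z\}$ is symmetric with $y$ in the middle ($x_1yz,zyx_1\in\calb$). Then from $x_1zx_2$ and $x_1yz$ axiom \textbf{3'} yields $yzx_2$, and from $x_2zx_1$ and $zyx_1$ axiom \textbf{2} yields $x_2zy$: so $\{x_2,y,z\}$ is symmetric with $z$ in the middle, and the two representatives give \emph{incompatible} data for $\bar\calb$. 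Your assertion that ``collapsing the duplicate pairs destroys exactly the symmetric triples'' therefore presupposes what must be proved, and I do not see how to rule out such cross-class symmetric triples using only \textbf{2}, \textbf{2'}, \textbf{3}, \textbf{3'} locally --- the cyclic $3$-set you assume to exist may be far away and you have no mechanism to propagate its influence.

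The paper's maximal-circuit device is exactly what supplies this leverage: once $C$ is fixed, one shows directly that for $a\notin C$ the interval conditions force a unique index $i$ with $x_{i-1}ax_{i+1}\in\calb$, and the circuit structure then lets one verify $[ax_i]\cap[x_ia]=V$ and that $a$ and $x_i$ are interchangeable in all triples. If you want to salvage the quotient route, you will still need some global object playing the role of $C$ to anchor the argument that symmetric triples cannot straddle three classes.
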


\begin{proof}
	First, consider the case where $\calb$ has no cyclic 3-set. Then, from \cite{shepperd1956transitivities} we know that $\calb=\calb_d$, for $d$ the metric defined by a path or by an undirected cycle of length four. So, we focus on the case where $\calb$ has a cyclic 3-set. 
	
	To ease the presentation, let us denote by $[xy]$ the set of all points $z\in Q$ such that the triple $xzy$ belongs to $\calb$ or $z\in \{x,y\}$.

	Let $C: x_0,\ldots,x_{p-1}$ be a maximal geodesic circuit of $\calb$. That is,  $x_{i-1}x_ix_{i+1}\in \calb$, for each $i=0,\ldots,p-1$ (note that the arithmetic in the indices is carried out in $\Z_{p}$) and for each $a \notin C$, $x_iax_{i+1}\notin \calb$, for each  $i=0,\ldots,p$. Also, it is worth to notice that this implies that for each $i\in \Z_p$ and for each $s\notin \{i,i+1\}$ the triples $x_{i}x_{i+1}x_s$ and $x_sx_ix_{i+1}$ are valid, while the triples $x_{i+1}x_{i}x_s$ and $x_sx_{i+1}x_{i}$ are not valid.
	
	If $C = V$, then $p = n$ and so $\calb$ is isomorphic to a directed cycle of on $n$ vertices. So, assume that $V \setminus C \neq \emptyset$.
	
	For $a \in V \setminus C$, let $j$ and $k$ be such that $[x_j a] \cap C = \{ x_j \}$ and $[ a x_k ] \cap C = \{x_k\}$. Since  $a \in \overrightarrow{x_j x_k}$, we get that $x_j a x_k\in \calb$, thus $j\neq k$. Moreover, $k\neq j+1$, as $C$ is maximal. 
    For each $l\in \Z_p$ such that $x_jx_lx_k$ belongs to $\calb$ we have that $x_jx_la$ and $x_jax_l$ do not belong to $\calb$ by the choice of $j$ and, the choice of $k$ and \textbf{3'}, respectively.

    By \textbf{L} we know that $ax_lx_j\notin \calb$, as otherwise, either $x_jx_la$ or $x_jax_l$ belongs to $\calb$. Similarly, $x_lax_j\notin \calb$. Therefore, by \textbf{B} we get that $ax_jx_l$ and $x_lx_ja$ belong to $\calb$. Similarly, we can prove that $ax_kx_l$ and $x_lx_ka$ belong to $\calb$.

    If $k\neq j+2$, then we have that $x_jx_{j+1}x_{j+2}$ and $x_{j+1}x_{j+2}x_k$ are valid and $p\geq 4$. We prove in this case that $a\notin \overrightarrow{x_{j+2}x_{j+1}}$, contradicting \textbf{B'}. From previous reasoning applied to $l=j+2$ we have that $ax_jx_{j+2},x_{j+2}x_ja\in \calb$. From this we get that $ax_jx_{j+1},ax_{j+1}x_{j+2}$ belongs to $\calb$. In turns, this implies that $ax_{j+2}x_{j+1}\notin \calb$. We also know that $x_{j+1}x_ka$ is valid which implies that $x_{j+1}x_{j+2}a$ is valid and thus $x_{j+2}x_{j+1}a$ is not valid. Finally, if $x_{j+2}ax_{j+1}$ belongs to $\calb$, then since $ax_jx_{j+1}$ is valid we get that $x_{j+2}ax_j$ is valid, as $x_{j+2}x_ja\in \calb$, we get a contradiction with \textbf{C}.
    
	Thus, we have that $k=j+2$. Therefore, we conclude that for each $a\notin C$, there is $i \in \Z_p$, such that $x_{i-1}ax_{i+1}$ is valid. Moreover, $\{x_{i},x_l,a\}^3\cap \calb=\{ax_lx_i,x_ix_la\}$, for $l\in \{i-1,i+1\}$. We show that this is also valid for $l \in \Z_p \setminus \{i-1,i,i+1\}$. That is,     $$[ax_i]\cap[x_ia]=V.$$ From $x_lx_ix_{i+1}\in \calb$ we get that $x_lax_i\notin \calb$, as otherwise, $ax_ix_{i+1}\in \calb$. Similarly, since $ ax_ix_{i+1}$ and $x_{i-1}x_ia$ do not belong to $\calb$, from $x_lx_{i-1}x_{i}\in \calb$ we get that $x_lx_ia\notin \calb$, from $x_ix_{i+1}x_l\in \calb$ we get that $ax_ix_{l}\notin \calb$, and from $x_{i-1}x_{i}x_l\in \calb$ we get that $x_iax_{l}\notin \calb$.  Therefore, $[x_ia]\cap [ax_i]=V$.

    This implies that, for $l\neq i$, $x_{l-1}ax_{l+1}\notin \calb$. In fact, for $i\notin \{l-1,l,l+1\}$, we know that  $x_{i-1}x_{l-1}x_{l+1}\in \calb$. Since the choice of $i-1$ implies that $x_{i-1}x_ia\notin \calb$, we get that $x_{l-1}ax_{l+1}\notin \calb$. For $i=l-1$, the choice of $i+1$  implies that $ax_ix_{i+1}\notin \calb$ which, together with the fact that $x_{i-2}x_ix_{i+1}\in \calb$ shows that $x_{i-2}ax_i\notin \calb$. For $i=l+1$, the fact that $x_{i-2}x_{i}x_{i+2}\in \calb$ implies $x_{i-2}ax_i\notin \calb$, as $ax_ix_{i+1}\notin \calb$.

    We now prove that if $x_ix_lx_t\in \calb$, then $ax_lx_t, x_lx_ta$ and $x_tax_l$ belong to $\calb$. From $x_ix_ta\in \calb$ we get that  $x_lax_t\notin \calb$. Similarly, from $x_ix_ta\in \calb$ we get that $x_tx_la\notin \calb$. Thus, $\{a,x_l,x_t\}^3\cap \calb=\{ax_lx_t,x_lx_ta, x_tax_l\}$. Since $\{x_i,x_l,x_t\}$ is cyclic, the same conclusion can be obtained when $x_lx_tx_i$ or $x_tx_ix_l$ belongs to $\calb$. This implies that $C'=(C\setminus \{x_i\})\cup \{a\}$ is also a maximal geodesic circuit. We prove now that if there is $b\notin C'$ with $x_{i-1}bx_{i+1}\in \calb$, then $b=x_i$. For a contradiction, let us assume that $b\neq x_i$. From previous argument we have that $x_{i-1}\in [ax_i]\cap [bx_i]\cap [ab]$. By \textbf{L}, one of the triples $abx_i,bax_i$ or $bx_ia$ must be valid. From $ax_{i-1}b\in\calb$, we get that $x_{i-1}bx_i\in \calb$, when $abx_i\in \calb$, which contradicts $x_{i-1}\in [bx_i]$. Similarly, from $bax_i\in \calb$  and  $bx_{i-1}a\in\calb$ we get the contradiction $x_{i-1}\notin [ax_i]$. Finally, when $bx_ia$ is in $\calb$ we get the contradiction $x_{i-1}x_ia\in \calb$, as $bx_{i-1}x_i\in \calb$. 
    
	From now on, when for a point $a$ not in $C$ we have that $x_{i-1}ax_{i+1}$ belongs to $\calb$, then we denote it by $y_i$. We have just proven that $y_i$ and $x_i$ behave interchangeably in $\calb$. For each $i \in \Z_p$, we call these (pairs of) points, \textit{points of type} $i$ in $V$. We have now established the frame to present a complete description of $\calb$. 
    
    So far, we have seen that three points in $V$ define a valid triple $uvw$ in one of two ways. First, if $u, v$ and $w$ are of type $i$, $j$ and $k$, respectively, with $x_ix_jx_k\in \calb$. On the other hand, if $u$ and $w$ are of the same type, and consequently, $v$ has a different type and $uvw$ and $wvu$ belong to $\calb$.

    Let $I\subseteq \Z_p$ the set of indices $i$ such that there is $u\in V\setminus C$ with $x_{i-1}ux_{i+1}\in \calb$. We associated to $(V,\calb)$ the subdigraph $D^*$ of $D_{p,p}$  containing a directed cycle of length $p$ with set of vertices 
    $$\{v_0,\ldots,v_{p-1}\}\cup \{u_i \mid i\in I\}.$$ To finish the proof we show that the function $f: V \rightarrow V(D^*)$ given by: $f(x_i) = v_i$, for each $i\in \Z_p$, and $f(y_i)=u_i$, for each $i\in I$, defines an isomorphism between $\calb$ and the ternary relation $\calb_d$ induced by the quasimetric $d$ of $D^*$. 
    
    Given our description of $\calb$ we have presented  above, the function $f$ is well-defined and it is easy to see that it is a bijection between $V$ and $V(D^*)$. To finish the proof we show that $f$ is an isomorphism between $\calb$ and $\calb_d$.  In fact, let  $a_ia_ja_k$ be a valid triple for $\calb$. We prove that $f(a_i)f(a_j)f(a_k)$ is a valid triple for $\calb_d$. If $a_i$, $a_j$ and $a_k$ are of type $i$, $j$ and $k$, respectively, and $x_ix_jx_k\in \calb$, then we have that $f(a_l) \in \{v_l,u_l\}$, for each $l\in \{i,j,k\}$ and $f(a_i)f(a_j)f(v_k)$ is valid for $\calb_d$. Otherwise, $i=k$, $j \notin \{i,k\}$ and so, $\{f(a_i),f(a_k)\}=\{v_i,u_i\}$ and $f(a_j)\in [v_iu_i]\cap [u_iv_i]$. 
    
\end{proof}

\section{Conclusion}
The results we have found suggest that a similar version could be valid for arbitrary sets $V$, where for a maximal cycle order $C$ on $V$, any point $x$ outside $C$ could be associated to a unique point $x'$ in $C$ such $(C\setminus \{x'\})\cup \{x\}$ is also a maximal cycle order and $[xx']=V$. We leave it as an open question.

\bibliographystyle{plain}
\bibliography{sample}

\end{document}